\newtheorem{thm}{Theorem}[section]
\newtheorem*{thm*}{Theorem}
\newtheorem{lem}[thm]{Lemma}
\newtheorem{cor}[thm]{Corollary}
\newtheorem{prop}[thm]{Proposition}
\theoremstyle{definition} % definition style
\newtheorem{defn}[thm]{Definition}
\newtheorem{ex}[thm]{Example}
\newtheorem{q}[thm]{Question}
\newtheorem{notn}[thm]{Notation}
\theoremstyle{remark} % remark style
\newtheorem{rem}[thm]{Remark}
\numberwithin{equation}{section}
\newcommand{\secref}[1]{Section~\textup{\ref{#1}}}
\newcommand{\corref}[1]{Corollary~\textup{\ref{#1}}}
\newcommand{\lemref}[1]{Lemma~\textup{\ref{#1}}}
\newcommand{\propref}[1]{Proposition~\textup{\ref{#1}}}
\newcommand{\defnref}[1]{Definition~\textup{\ref{#1}}}
\newcommand{\remref}[1]{Remark~\textup{\ref{#1}}}
\newcommand{\qref}[1]{Question~\textup{\ref{#1}}}
\newcommand{\exref}[1]{Example~\textup{\ref{#1}}}
\newcommand{\FF}{\mathcal F}
\newcommand{\RR}{\mathcal R}
\newcommand{\HH}{\mathcal H}
\newcommand{\N}{\mathbb N}
\newcommand{\Z}{\mathbb Z}
\newcommand{\T}{\mathbb T}
\newcommand{\R}{\mathbb R}
\renewcommand{\bar}{\overline}
\newcommand{\what}{\widehat}
\newcommand{\wilde}{\widetilde}
\newcommand{\inv}{^{-1}}
\newcommand{\<}{\langle}
\renewcommand{\>}{\rangle}
\newcommand{\ann}{^\perp}
\newcommand{\pann}{{}\ann}
\newcommand{\Chi}{\raisebox{2pt}{\ensuremath{\chi}}}
\renewcommand{\epsilon}{\varepsilon}
\DeclareMathOperator*{\spn}{span}
\DeclareMathOperator*{\clspn}{\overline{\spn}}
\newcommand{\righttext}[1]{\quad\text{#1 }}
\newcommand{\wkstcl}{^{\text{weak*}}}
\begin{document}
\title[Odered invariant ideals of $B(G)$]{Ordered invariant ideals of Fourier-Stieltjes algebras}
\author[Kaliszewski]{S. Kaliszewski}
\address{School of Mathematical and Statistical Sciences
\\Arizona State University
\\Tempe, Arizona 85287}
\email{kaliszewski@asu.edu}
\author[Landstad]{Magnus~B. Landstad}
\address{Department of Mathematical Sciences\\
Norwegian University of Science and Technology\\
NO-7491 Trondheim, Norway}
\email{magnusla@math.ntnu.no}
\author[Quigg]{John Quigg}
\address{School of Mathematical and Statistical Sciences
\\Arizona State University
\\Tempe, Arizona 85287}
\email{quigg@asu.edu}
%\date{Revised draft, \today}
\date{June 17, 2016}
\subjclass[2000]{Primary  46L55; Secondary 46L25, 22D25}
\keywords{
Locally compact group,
%crossed product,
%action,
coaction,
Fourier-Stieltjes algebra,
positive-definite function}

\begin{abstract}
In a recent paper on exotic crossed products,
we included a lemma concerning ideals of the Fourier-Stieltjes algebra.
Buss, Echterhoff, and Willett have pointed out to us that our proof of this lemma contains an error.
In fact, it remains an open question whether the lemma is true as stated.
In this note we indicate how to contain the resulting damage.

Our investigation of the above question leads us to define two properties \emph{ordered} and \emph{weakly ordered} for invariant ideals of Fourier-Stieltjes algebras,
and we initiate a study of these properties.
\end{abstract}

\maketitle

\section{Introduction}\label{intro}

Throughout, let $G$ be a locally compact group.
In an effort to extend the class of groups for which the Baum-Connes conjecture is valid, Baum, Guentner, and Willett introduced in \cite{bgwexact} crossed-product functors, which transform actions of locally compact groups on $C^*$-algebras into $C^*$-algebras between the full and reduced crossed products.
Our approach to this has been to form crossed-product functors by applying coaction functors to the full crossed products.
This in particular requires us to study
exotic group $C^*$-algebras between $C^*(G)$ and $C^*_r(G)$ to form coaction functors.
When $G$ is discrete, Brown and Guentner introduced in \cite{bg} a certain method of generating exotic group $C^*$-algebras of $G$, starting with a $G$-invariant ideal $D$ of $\ell^\infty(G)$.
Their method carries over to locally compact $G$,
letting $D$ be a $G$-invariant ideal of either $L^\infty(G)$ or the algebra $C_b(G)$ of continuous bounded functions.
(This $L^\infty$-or-$C_b$ ambiguity is useful, because there are examples in which the ideal most naturally resides in one or the other.)
They used $D$ to define a class of unitary representations of $G$, and then applied standard $C^*$-representation theory to get an associated quotient of $C^*(G)$, denoted by $C^*_D(G)$, which was their main object of study.
Recently there has been a flurry of activity regarding
constructions using
these group $C^*$-algebras
(a brief sampling:
\cite{
brarua, 
bew2, 
bew, 
klqexact, 
klqfunctor, 
exotic,
Kyed,
ruawieexotic,
wiersmatensor, 
wiersmaexotic, 
wiersmalp}).

In \cite{graded} our strategy was to study the exotic group $C^*$-algebra $C^*_D(G)$ in terms of the dual space $C^*_D(G)^*$ of bounded linear functionals, which can be identified with a weak*-closed subspace of the Fourier-Stieltjes algebra $B(G)=C^*(G)^*$, namely the annihilator in $B(G)$ of the kernel of the quotient map $C^*(G)\to C^*_D(G)$.

However, recently a fundamental question has arisen:
the same $D$ can be used to arrive at a weak*-closed subspace of $B(G)$ in a different way: first form the intersection $E:=D\cap B(G)$, then take the weak*-closure $\bar E$ in $B(G)$.

\begin{q}\label{fundamental}
Does the above weak*-closure $\bar E$ coincide with the dual space $C^*_D(G)^*$?
\end{q}

In \cite[Lemma~3.5 (1)]{graded} we thought we had proven that the answer to \qref{fundamental} is yes. But it has recently been pointed out to us by Buss, Echterhoff, and Willett that our argument for one of the two inclusions is incorrect,
although we did give a correct proof of the inclusion $C^*_D(G)^*\subset \bar E$.
At this point we do not know whether \cite[Lemma~3.5 (1)]{graded} is true in general;
see \secref{investigate} for a discussion.
In this note we investigate this question, although we hasten to emphasize that we do not have a complete solution.

Our faulty proof of \cite[Lemma~3.5 (1)]{graded} seemed to depend upon the following property of a nonzero $G$-invariant ideal $E$ of $B(G)$:
$E=\spn\{E\cap P(G)\}$,
where $P(G)$ denotes the set of continuous positive definite functions
on $G$,
equivalently the set of positive linear functionals on $C^*(G)$.
We begin the study of this property,
which we call \emph{ordered},
in \secref{investigate},
and in \exref{counterex} we give a counterexample.
It transpires that to make things work we really only need $\spn\{E\cap P(G)\}$ to be weak* dense in $E$,
which we call \emph{weakly ordered},
and in \secref{investigate} we explore this property as well.
We do not know whether every nonzero $G$-invariant ideal of $B(G)$ has it.

In \secref{classical} we examine the above properties for certain ``classical'' ideals of $B(G)$,
particularly those arising from $L^p$ spaces,
and
in \secref{summary} we indicate how to ameliorate all damage caused by the suspect \cite[Lemma~3.5 (1)]{graded}.
The main take-away from all this is the following:
\textbf{\cite[Lemma~3.5 (1)]{graded} should have included the hypothesis that
the linear span of $E\cap P(G)$ is weak*-dense in
$E$.
To repair the damage,
any result that appeals to \cite[Lemma~3.5 (1)]{graded} should include this hypothesis.}

\section{The setup}\label{setup}

Since the $G$-invariant set $D$ discussed in \secref{intro} is only used to restrict the coefficient functions of representations, the intersection with the Fourier-Stieltjes algebra is all that matters.
So, let $E$ be a $G$-invariant (not necessarily weak*- or even norm-closed) vector subspace of $B(G)$.

\begin{rem}
If $E$ arises as in the introduction, by intersecting $B(G)$ with a $G$-invariant ideal of $L^\infty(G)$ or $C_b(G)$, then $E$ will also be an ideal of $B(G)$.
Although this property is in fact important to us for our main study of coaction functors, for the time being we only require $E$ to be a $G$-invariant subspace of $B(G)$.
\end{rem}

\begin{defn}
Let $U$ be a unitary representation of $G$ on a Hilbert space $H$, and let $\xi,\eta\in H$.
Define the \emph{coefficient function} $U_{\xi,\eta}$ by
\[
U_{\xi,\eta}(x)=\<U_x\xi,\eta\>\righttext{for}x\in G.
\]
If $\xi=\eta$ we write $U_\xi$.

We will find it convenient to adopt the convention that the zero representation of $G$ (on the 0-dimensional Hilbert space) is unitary.
\end{defn}

\begin{defn}[{see \cite[Definition~2.1]{bg}}]\label{E rep}
An \emph{$E$-representation} of $G$ is a triple $(U,H,H_0)$, where $U$ is a representation of $G$ on a Hilbert space $H$ and $H_0$ is a dense subspace of $H$ such that $U_{\xi,\eta}\in E$ for all $\xi,\eta\in H_0$.
(In \cite{bg}, the subspace $H_0$ is denoted by $\HH_0$.)

With our convention that the zero representation is unitary, we see that it is trivially an $E$-representation.
\end{defn}

The fussy notation $(U,H,H_0)$ will help us keep track of things; \cite{bg} just refers to $U$ itself as the $E$-representation\footnote{actually, a $D$-representation where $D$ is a $G$-invariant ideal of $\ell^\infty(G)$ for a discrete group $G$}, and sometimes we will also do this.

Of course, $U$ integrates to a nondegenerate representation of $C^*(G)$ on $H$, which we also denote by $U$.
When we refer to the kernel of $U$, we mean the ideal
$\ker U:=\{a\in C^*(G):U(a)=0\}$ of $C^*(G)$.
Part (1) of
the following definition is taken from 
\cite[Definition~2.2]{bg},
and part (2) from \cite[Definition~3.2]{graded}.

\begin{defn}\label{J def}
Let $E$ be a $G$-invariant subspace of $B(G)$.
\begin{enumerate}
\item
Define an ideal of $C^*(G)$ by
\[
J_E=\bigcap\{\ker U:\text{$U$ is an $E$-representation of $C^*(G)$}\},
\]
and then let
\[
C^*_{E,BG}(G)=C^*(G)/J_E.
\]

\item
On the other hand, by $G$-invariance the preannihilator $\pann E$ is also an ideal of $C^*(G)$,
so we can define another quotient $C^*$-algebra by
\[
C^*_{E,KLG}(G)=C^*(G)/\pann E.
\]
\end{enumerate}
\end{defn}

\begin{rem}
Since $E$ is a $G$-invariant subspace of $B(G)$,
the weak*-closure $\bar E$
is also $G$-invariant, and hence
is a $C^*(G)$-subbimodule of $B(G)$,
so by \cite[Corollary~3.10.8]{pedersen}
the preannihilator $\pann E=\pann \bar E$ is a closed ideal of $C^*(G)$.
This is also recorded in 
\cite[Lemma~2.10]{bew2}.
The notation $C^*_{E,BG}$ and $C^*_{E,KLQ}$ comes from \cite{bew}.
\end{rem}

The following is an alternative version of \qref{fundamental}, as we will see from the results in \secref{investigate}:

\begin{q}\label{JE}
If $E$ is a $G$-invariant subspace of $B(G)$, is
$J_E=\pann E$?
Equivalently, 
is
$C^*_{E,BG}(G)=C^*_{E,KLQ}(G)$?
\end{q}

For our purposes, we can assume that $E$ is actually an ideal of $B(G)$.
However, as far as we know \qref{JE} as stated remains open.

The inclusion $J_E\supset \pann E$ always holds,
as (correctly) shown in the second half of the proof of \cite[Lemma~3.5 (1)]{graded}, and here is the argument:
Let $a\in \pann E$, and let $(U,H,H_0)$ be an $E$-representation.
Then for all $\xi,\eta\in H_0$ we have $U_{\xi,\eta}\in E$, so
\[
0=U_{\xi,\eta}(a)=\<U(a)\xi,\eta\>,
\]
and so $U(a)=0$ by density. Thus $a\in J_E$.
Interestingly, this inclusion will also fall out of our investigation below (see \corref{one way}).

In \cite{graded} we gave an incorrect argument 
for
the opposite containment $J_E\subset \pann E$.

\begin{rem}
Just for fun, here is an alternative argument for the inclusion
proved above:
First, a set-theoretic technicality: there is a \emph{set}
$\RR$ of representations of $G$ such that
\[
J_E=\bigcap\{\ker U:U\in\RR\}.
\]
(The issue here is that in \defnref{J def} the intersection is indexed by a proper class, i.e., not a set. But we are intersecting a \emph{set} of ideals.)
\cite[Proposition~3.4.2 (i)]{dixmier} says that every state of $C^*(G)$ that vanishes on $J_E$ is a weak*-limit of states of the form
\[
\sum_{U\in\FF}U_{\xi_{U,H,H_0}},
\]
where $\FF\subset\RR$ is finite and $\xi_{U,H,H_0}\in H$ for all $(U,H,H_0)\in\FF$.
Now, by density each such state can be approximated in the weak*-topology by states of the same form but with $\xi_{U,H,H_0}\in H_0$ for each $(U,H,H_0)\in\FF$.
Thus every state in $J_E\ann$ is in $\bar E$.
Since $J_E\ann$ is the dual space of the quotient $C^*$-algebra $C^*(G)/J_E$,
every element of $J_E\ann$ is a linear combination of states in $J_E\ann$,
and hence $J_E\ann\subset \bar E$ by the preceding.
Therefore $J_E\supset \pann E$.
\end{rem}

\section{Investigation of the question}\label{investigate}

We now proceed to investigate \qref{JE}.
Throughout, $E$ will denote a $G$-invariant subspace of $B(G)$.
We want to find a reasonably general sufficient condition for $J_E=\pann E$.
In this section we illustrate one approach, mainly using the Hahn-Banach theorem.
First we introduce some auxiliary notation.
Recall from \defnref{E rep} that our notation for an $E$-representation is a triple $(U,H,H_0)$, where we keep track of the Hilbert space $H$ and the dense subspace $H_0$.

\begin{notn}
We write
\[
E_r:=\{U_{\xi,\eta}:(U,H,H_0)\text{ is an $E$-representation},\xi,\eta\in H_0\}.
\]
Note: as a consequence of our convention that the zero representation is (unitary and hence is) an $E$-representation, we see that $0\in E_r$.
\end{notn}

\begin{rem}\label{A_E}
Think of the elements of $E_r$ as ``representable'' (which is our motivation for the notation).
In \cite[Section~4]{wiersmalp}, Wiersma defines something similar but not quite the same --- he would write $A_E(G)$ for the set of all coefficient functions $U_{\xi,\eta}$, where now $\xi$ and $\eta$ are allowed to be any vectors from the Hilbert space $H$ of the $E$-representation $U$, not just from the dense subspace $H_0$.
\end{rem}

\begin{rem}
Somehow irritating, we still do not know whether $G$ has any nonzero $E$-representations (see \qref{exist} below).
\end{rem}

\begin{lem}\label{subspace}
For a $G$-invariant subspace of $B(G)$,
$E_r$
is a vector subspace of 
$E$.
\end{lem}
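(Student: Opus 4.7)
The plan is to unpack three things in turn: (i) $E_r\subset E$, (ii) closure of $E_r$ under scalar multiplication, and (iii) closure under addition. The containment (i) is immediate, since by the very definition of an $E$-representation the coefficient function $U_{\xi,\eta}$ of any $(U,H,H_0)\in E_r$ with $\xi,\eta\in H_0$ lies in $E$. So the content is (ii) and (iii), and the natural tools are the obvious operations on representations: scaling a vector, and taking a direct sum.

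For (ii), given $U_{\xi,\eta}\in E_r$ coming from an $E$-representation $(U,H,H_0)$ and a scalar $\lambda\in\C$, I would use the linearity of $\<U_x\cdot,\eta\>$ in the first slot to write
\[
\lambda U_{\xi,\eta}(x)=\<U_x(\lambda\xi),\eta\>=U_{\lambda\xi,\eta}(x).
\]
Since $H_0$ is a vector subspace, $\lambda\xi\in H_0$, and hence $\lambda U_{\xi,\eta}\in E_r$ with the same underlying $E$-representation $(U,H,H_0)$.

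For (iii), the idea is to realize the sum of two coefficient functions as a single coefficient function of a direct-sum representation. Given $U_{\xi,\eta}$ from $(U,H,H_0)$ and $V_{\xi',\eta'}$ from $(V,K,K_0)$, I form $W=U\oplus V$ on $H\oplus K$, with $W_0:=H_0\oplus K_0$ (algebraic direct sum) as the dense subspace. A direct computation shows
\[
W_{(\xi,\xi'),(\eta,\eta')}(x)=\<U_x\xi,\eta\>+\<V_x\xi',\eta'\>=U_{\xi,\eta}(x)+V_{\xi',\eta'}(x),
\]
so once I verify that $(W,H\oplus K,W_0)$ is itself an $E$-representation, the sum lies in $E_r$. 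This is the one place where a small check is needed, and it is the main (but mild) obstacle: for arbitrary $(a,b),(c,d)\in W_0$, bilinearity of the inner product on $H\oplus K$ gives
\[
W_{(a,b),(c,d)}=U_{a,c}+V_{b,d},
\]
and both summands lie in $E$ because $(U,H,H_0)$ and $(V,K,K_0)$ are $E$-representations; since $E$ is a vector subspace of $B(G)$, their sum lies in $E$ too. Finally, $0\in E_r$ since the zero representation is, by convention, an $E$-representation. Combining (i)--(iii), $E_r$ is a vector subspace of $E$.
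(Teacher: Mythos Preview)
Your proof is correct and follows essentially the same route as the paper: scalar multiplication is handled by scaling a vector in $H_0$, and closure under addition is obtained by realizing the sum as a coefficient function of the direct-sum representation $(U\oplus V,H\oplus K,H_0\oplus K_0)$. The only difference is that you spell out explicitly why the direct sum is an $E$-representation (via $W_{(a,b),(c,d)}=U_{a,c}+V_{b,d}\in E$), whereas the paper simply cites \cite[Remark~2.4]{bg} for this.
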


\begin{proof}
It is obvious that $E_r$ is closed under scalar multiplication.
Note that any direct sum of $E$-representations is an $E$-representation,
by the same reasoning as \cite[Remark~2.4]{bg}.
Let $f,g\in E$,
and choose $E$-representations $(U,H,H_0)$ and $(V,K,K_0)$ and vectors $\xi,\eta\in H_0$ and $\kappa,\zeta\in K_0$ such that
$f=U_{\xi,\eta}$ and $g=V_{\kappa,\zeta}$.
Then $(U\oplus V,H\oplus K,H_0\odot K_0)$ is an $E$-representation,
where $H_0\odot K_0$ stands for the algebraic direct sum of the vector subspaces $H_0$ and $K_0$,
and
\[
U_{\xi,\eta}+V_{\kappa,\zeta}=(U\oplus V)_{(\xi,\kappa),(\eta,\zeta)}.
\qedhere
\]
\end{proof}

\begin{defn}\label{pg property}
For a (not necessarily $G$-invariant) subspace $E$ of $B(G)$, put
$E_0=\spn\{E\cap P(G)\}$.
We say that $E$ is \emph{ordered} if $E_0=E$,
and we say that $E$ is \emph{weakly ordered} if $E_0$ is weak* dense in $E$.
\end{defn}

Although in the above definition we temporarily removed the tacit assumption that $E$ is $G$-invariant, we will continue to impose this assumption unless otherwise specified.

We will see that not every subspace of $B(G)$ is ordered, and we begin with an obvious obstruction:
First recall that the involution in the Fourier-Stieltjes algebra $B(G)$ is given by
\[
\wilde f(x)=\bar{f(x\inv)}.
\]
It follows from the properties of duals of $C^*$-algebras that
every ordered subspace $E$ of $B(G)$
is self-adjoint: if $f\in E$ then also $\wilde f\in E$.

\begin{rem}
The above terminology ``ordered'' makes sense because it is precisely what it means for the self-adjoint part of $E$ to be a partially ordered (real) subspace of the self-adjoint part of $B(G)$.
It is slightly less obvious that the terminology ``weakly ordered'' is sensible, but it is somehow related to the property ``ordered'' and is obviously weaker (and as we will show, strictly so).
\end{rem}

We will show in \exref{counterex} that in fact not every $G$-invariant ideal of $B(G)$ is ordered.

\begin{q}\label{PG q}
Is every $G$-invariant subspace of $B(G)$ weakly ordered?
\exref{jack} below gives some negative evidence, although it does not furnish a counterexample.
\end{q}

\begin{lem}
$E$ is weakly ordered if and only if $\pann E=\pann E_0$.
\end{lem}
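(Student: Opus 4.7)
The plan is to reduce this to the bipolar theorem in the dual pairing between $C^*(G)$ and $B(G)=C^*(G)^*$. Since $E_0\subset E$ by definition, the inclusion $\pann E\subset\pann E_0$ is automatic, so only the reverse inclusion is in play. I would then handle the two implications separately, each with a one-line argument once the right duality is invoked.

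For the forward direction (weakly ordered $\Rightarrow$ $\pann E=\pann E_0$), I would take $a\in\pann E_0$ and, for an arbitrary $f\in E$, use the weak*-density of $E_0$ in $E$ to pick a net $(f_\lambda)\subset E_0$ with $f_\lambda\to f$ in the weak* topology. Since evaluation at $a$ is weak*-continuous on $B(G)$, $f(a)=\lim_\lambda f_\lambda(a)=0$, so $a\in\pann E$.

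For the converse, I would invoke the bipolar theorem in the following form: for any subspace $F$ of $B(G)$, the annihilator $(\pann F)^\perp$ (inside $B(G)$) coincides with the weak*-closure $\bar F\wkstcl$. Applying this to $E$ and to $E_0$ and using the hypothesis $\pann E=\pann E_0$ gives $\bar E\wkstcl=\bar{E_0}\wkstcl$. Since $E\subset\bar E\wkstcl$, we obtain $E\subset\bar{E_0}\wkstcl$, which is exactly the statement that $E_0$ is weak*-dense in $E$.

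No serious obstacle arises here: $E_0$ is manifestly a subspace (being defined as a linear span), so the bipolar theorem applies without fuss, and the whole argument is an application of standard weak*-duality. The only point to keep an eye on is being precise that the weak*-topology in question is $\sigma(B(G),C^*(G))$, so that the preannihilator/annihilator language matches up with \cite[Corollary~3.10.8]{pedersen}-style identifications used elsewhere in the paper.
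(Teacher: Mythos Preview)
Your proof is correct and follows essentially the same approach as the paper. The paper's proof is a one-liner---``Since $E_0\subset E$, this follows from the Hahn-Banach theorem''---and your argument simply unpacks that: the forward direction is the easy continuity observation, and the converse is the bipolar theorem (which is the relevant consequence of Hahn-Banach here).
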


\begin{proof}
Since $E_0\subset E$, this follows from the Hahn-Banach theorem.
\end{proof}

The following is equivalent to \cite[Lemma~2.15]{bew2}
(see also \cite[Proposition~4.3]{wiersmalp}),
with a somewhat different proof.

\begin{prop}[\cite{bew2}]\label{closed}
If $E$ is closed in the norm of $B(G)$,
then it is ordered.
\end{prop}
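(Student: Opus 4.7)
The plan is to realize each $f \in E$ as a coefficient of the GNS representation of its Sakai polar modulus $|f|$ and then apply the polarization identity to four diagonal (hence positive-definite) coefficients whose defining vectors all lie in one cyclic subspace that $G$-invariance and norm-closure force into $E$.

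Concretely, let $V \colon C^*(G) \to B(H)$ be the GNS representation of $|f|$, with cyclic vector $\zeta$, extended normally to $C^*(G)^{**}$, and let $u \in C^*(G)^{**}$ be the partial isometry from Sakai's polar decomposition, so that $u^* u = s(|f|)$ and $f(a) = |f|(au)$ for all $a \in C^*(G)$. A direct calculation gives $f = V_{V(u)\zeta,\,\zeta}$. Because $L_g R_h V_{\xi,\eta} = V_{V_h \xi,\, V_g \eta}$, the $G$-invariance and norm-closure of $E$, together with the sesquilinear boundedness of $(\alpha,\beta) \mapsto V_{\alpha,\beta}$, imply that $V_{\alpha,\beta} \in E$ whenever $\alpha$ lies in the cyclic subspace $K := \overline{V(C^*(G))V(u)\zeta}$ of $V(u)\zeta$ and $\beta$ lies in $\overline{V(C^*(G))\zeta} = H$.

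The key step is to show that $\zeta \in K$. Since $V(s(|f|))\zeta = \zeta$, Kaplansky density applied to the contraction $V(u^*) \in V(C^*(G))''$ supplies a bounded net $(a_\lambda) \subset C^*(G)$ with $V(a_\lambda) \to V(u^*)$ strongly, whence
\[
V(a_\lambda)V(u)\zeta \;\longrightarrow\; V(u^* u)\zeta \;=\; V(s(|f|))\zeta \;=\; \zeta
\]
in $H$, putting $\zeta$ into $K$. For each $k = 0,1,2,3$ the vector $V(u)\zeta + i^k \zeta$ therefore lies in $K$, so each diagonal coefficient $V_{V(u)\zeta + i^k\zeta}$ belongs to $E \cap P(G)$, and the polarization identity
\[
f \;=\; V_{V(u)\zeta,\,\zeta} \;=\; \frac{1}{4}\sum_{k=0}^{3} i^k\, V_{V(u)\zeta + i^k\zeta}
\]
exhibits $f$ as a finite linear combination of elements of $E \cap P(G)$.

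The main obstacle is precisely the claim $\zeta \in K$: a generic realization of $f$ as a coefficient $U_{\xi,\eta}$ would leave $\xi$ and $\eta$ in unrelated cyclic subspaces, and the polarization vectors $\xi + i^k\eta$ would not yield diagonal coefficients known to lie in $E$. Sakai's polar decomposition is what aligns the two endpoints, because the defining identity $u^* u = s(|f|)$ is exactly what allows Kaplansky density to pull $V(u)\zeta$ back onto $\zeta$.
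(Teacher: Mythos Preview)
Your proof is correct.  The paper actually gives two arguments.  The main one is a one-line appeal to Arsac's theorem: a norm-closed $G$-invariant subspace of $B(G)$ is exactly the set of coefficient functions of a single representation $U$, hence the predual of the von Neumann algebra $U(G)''$, and preduals of von Neumann algebras are spanned by their positive normal functionals.  The paper also records (in the remark immediately following the proposition) an alternative hands-on argument that is closer in spirit to yours: realize $f=U_{\xi,\eta}$ with $\|f\|=\|\xi\|\,\|\eta\|$, use this norm extremality together with the commuting projections onto $H_\xi$ and $H_\eta$ to force $H_\xi=H_\eta$, and then polarize inside that common cyclic subspace.  Your route replaces the minimal-norm realization by Sakai's polar decomposition $f=u\cdot|f|$ in $C^*(G)^{**}$, works in the GNS space of $|f|$, and replaces the projection argument by Kaplansky density to pull $\zeta$ into the cyclic subspace $K$ of $V(u)\zeta$.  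Both hands-on arguments have the same architecture --- place the two coefficient vectors in a single $G$-cyclic subspace whose diagonal coefficients lie in $E$, then polarize --- but they invoke different extremality principles to achieve the alignment.  The Arsac route is shorter but imports nontrivial structure theory; your argument and the paper's alternative are more self-contained, relying only on GNS and basic von Neumann density facts.
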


\begin{proof}
By \cite[Theorem~3.17]{arsac} $E$ is
the set of all coefficient functions
of some representation $U$ of $G$, and then by 
\cite[Proposition~2.2]{arsac} 
$E$ is the predual of the von Neumann algebra generated by $U(G)$.
It then follows (see, for example, 
\cite[Proposition~3.6.2]{pedersen}
or
\cite[Theorem~12.3.3]{dixmier})
that $E$ is the linear span of positive linear functionals.
\end{proof}

\begin{rem}
\propref{closed} obviously applies in particular to 
situations where 
$E$ is relatively closed in $B(G)$ as a subset of $C_b(G)$ with the sup norm,
most importantly
$E=C_0(G)\cap B(G)$,
as observed in
\cite[Lemma~2.15]{bew2}.
\end{rem}

\begin{rem}
Here is an alternative, somewhat more elementary, argument:
Since $E$ is a subspace, trivially $E_0\subset E$.
For the opposite containment, let $f\in E$.
To show that $f\in E_0$, without loss of generality $f\ne 0$.
By \cite[Proposition~4.1]{wiersmalp} (for example), there is 
a representation
$U$ and vectors $\xi,\eta$ such that $f=U_{\xi,\eta}$ and $\|f\|=\|\xi\|\|\eta\|$ (where the norm of $f$ is taken in $B(G)$).

Let
\begin{align*}
H_\xi&=\clspn\{U_x\xi:x\in G\}
\\
H_\eta&=\clspn\{U_x\eta:x\in G\},
\end{align*}
and let $P_\xi$ and $P_\eta$ be the orthogonal projections onto these respective subspaces.
Since $P_\eta$ commutes with $U$,
\[
f(x)=\<U_x\xi,P_\eta\eta\>=\<U_xP_\eta\xi,\eta\>.
\]
Thus by construction we have
\[
\|P_\eta\xi\|\|\|\eta\|\ge \|f\|=\|\xi\|\|\eta\|,
\]
so $\|P_\eta\xi\|\ge \|\xi\|$,
and 
hence, since $P_\eta$ is a projection,
we must have
$P_\eta\xi=\xi$, giving $\xi\in H_\eta$.
Similarly $\eta\in H_\xi$.
Thus in fact $H_\xi=H_\eta$.
Since $E$ is $G$-invariant and norm-closed,
the coefficient functions $U_{\xi',\eta'}$
are in $E$ for all 
$\xi',\eta'\in H_\xi$.
Thus
\begin{align*}
f
&=U_{\xi,\eta}
=\frac 14\sum_{k=0}^3i^kU_{\xi+i^k\eta}\in E_0.
\qedhere
\end{align*}
This argument should be compared to \cite[proof of Proposition~4.3]{wiersmalp} and \cite[proof of Lemma~2.15]{bew2}.
\end{rem}

\begin{prop}\label{represent}
$E_0=E_r$.
\end{prop}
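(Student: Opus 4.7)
The plan is to prove the two inclusions $E_r\subset E_0$ and $E_0\subset E_r$ separately; each reduces to one standard construction.

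For $E_r\subset E_0$, the polarization identity already invoked in the alternative proof of \propref{closed} will do the job. Given $f=U_{\xi,\eta}$ with $(U,H,H_0)$ an $E$-representation and $\xi,\eta\in H_0$, I would expand
\[
U_{\xi,\eta}=\frac14\sum_{k=0}^3 i^k U_{\xi+i^k\eta},
\]
and observe that each summand is a diagonal coefficient $U_\zeta$ of the same $E$-representation at the vector $\zeta=\xi+i^k\eta\in H_0$, hence is simultaneously continuous positive definite and an element of $E$. So each summand belongs to $E\cap P(G)$, and therefore $f\in E_0$.

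For the other inclusion, \lemref{subspace} tells us that $E_r$ is a vector subspace, so it is enough to check that every $f\in E\cap P(G)$ lies in $E_r$. Here I would apply the GNS construction to $f$ to obtain a cyclic unitary representation $(U,H)$ with cyclic vector $\xi$ such that $f=U_\xi$, and then set $H_0:=\spn\{U_x\xi:x\in G\}$, which is dense in $H$ by cyclicity. The key computation is
\[
U_{U_x\xi,\,U_y\xi}(z)=\<U_{zx}\xi,U_y\xi\>=f(y\inv zx),
\]
a two-sided translate of $f$; by the $G$-invariance of $E$---that is, its $C^*(G)$-subbimodule property recorded in the remark after \defnref{J def}---this translate again lies in $E$. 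Extending by sesquilinearity shows $U_{\eta,\zeta}\in E$ for all $\eta,\zeta\in H_0$, so $(U,H,H_0)$ is a bona fide $E$-representation and $f=U_\xi\in E_r$, completing the argument.

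The only real subtlety is confirming that ``$G$-invariant'' is strong enough to absorb translates on \emph{both} sides; this is the standing convention of the paper, so I do not expect it to present a genuine obstacle.
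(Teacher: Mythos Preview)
Your proposal is correct and follows essentially the same route as the paper's proof: polarization for $E_r\subset E_0$, and GNS applied to $f\in E\cap P(G)$ together with two-sided translation invariance of $E$ for $E_0\subset E_r$ (invoking \lemref{subspace} to pass to spans). The only cosmetic difference is that the paper writes the key computation as $x\cdot U_\xi\cdot y^{-1}\in E$ rather than $f(y^{-1}zx)$, and your appeal to the remark after \defnref{J def} is slightly imprecise since that remark concerns the $C^*(G)$-bimodule structure of $\bar E$; but all you actually need---and all the paper uses---is two-sided $G$-translation invariance of $E$ itself, which is indeed the standing convention.
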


\begin{proof}
Let $f\in E\cap P(G)$.
Choose a cyclic representation $U$ of $C^*(G)$ on a Hilbert space $H$, with cyclic vector $\xi$, such that $f=U_\xi$.
Let
\[
H_0=\spn_{x\in G}\{U_x\xi\},
\]
which is a dense subspace of $H$.
For all $a\in C^*(G)$ and $x,y\in G$,
\[
\bigl\<U(a)U_x\xi,U_y\xi\bigr\>=x\cdot U_\xi\cdot y\inv(a),
\]
and $x\cdot U_\xi\cdot y\inv\in E$,
so $(U,H,H_0)$ is an $E$-representation.
Therefore $f\in E_r$.
By \lemref{subspace} it follows that $E_0\subset E_r$.

On the other hand, if $(U,H,H_0)$ is an $E$-representation and $\xi,\eta\in H_0$,
then
\[
U_{\xi,\eta}=\frac 14\sum_0^3i^kU_{\xi+i^k\eta},
\]
and $\xi+i^k\eta\in H_0$ for $k=0,\dots,3$.
Thus $E_r\subset E_0$.
\end{proof}

\begin{rem}
Recall from \remref{A_E} that in \cite{wiersmalp} Wiersma writes $A_E(G)$ for the set of all coefficient functions of $E$-representations.
Using our notation, \cite[Proposition~4.3]{wiersmalp} says that $A_E(G)=\bar{E_0}$.
Thus by \propref{represent}, $A_E(G)=\bar{E_r}$.
\end{rem}

\begin{cor}\label{JE E0}
$J_E=\pann E_0$.
\end{cor}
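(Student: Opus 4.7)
The plan is to combine \propref{represent} with the standard coefficient-function/operator pairing that already appeared in the argument for $J_E\supset\pann E$. Since \propref{represent} identifies $E_0$ with $E_r$, the goal $J_E=\pann E_0$ becomes $J_E=\pann E_r$, and both inclusions unpack directly from the definitions.

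First I would handle $J_E\subset\pann E_0$. Using $E_0=E_r$, take $a\in J_E$ and an arbitrary generator $U_{\xi,\eta}\in E_r$, where $(U,H,H_0)$ is an $E$-representation and $\xi,\eta\in H_0$. By definition of $J_E$ we have $U(a)=0$, so
\[
U_{\xi,\eta}(a)=\bigl\<U(a)\xi,\eta\bigr\>=0.
\]
Thus $a$ kills every element of $E_r$, and hence every element of its linear span $E_0$.

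For the reverse inclusion $\pann E_0\subset J_E$, I would reverse this computation. Let $a\in\pann E_0=\pann E_r$, and fix an arbitrary $E$-representation $(U,H,H_0)$. For all $\xi,\eta\in H_0$ the function $U_{\xi,\eta}$ lies in $E_r\subset E_0$, so
\[
\bigl\<U(a)\xi,\eta\bigr\>=U_{\xi,\eta}(a)=0.
\]
Since $H_0$ is dense in $H$, this forces $U(a)=0$, i.e.\ $a\in\ker U$. As $(U,H,H_0)$ was arbitrary, $a\in J_E$.

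I do not expect any real obstacle: the only nontrivial ingredient is \propref{represent}, which has already been established, and the density-of-$H_0$ step is identical to the one used earlier to show $J_E\supset\pann E$. In particular this corollary also gives a second proof of that earlier inclusion, since $\pann E\subset\pann E_0=J_E$.
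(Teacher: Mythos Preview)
Your argument is correct and is essentially the paper's proof unpacked into two inclusions: the paper writes the same content as a single chain of equalities $J_E=\bigcap\ker U=\bigcap\ker U_{\xi,\eta}=\pann E_r=\pann E_0$, invoking density and \propref{represent} at exactly the same points you do. Your closing observation that this yields another proof of $\pann E\subset J_E$ is likewise made in the paper immediately after the corollary.
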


\begin{proof}
We have
\begin{align*}
J_E
&=\bigcap\{\ker U:(U,H,H_0)\text{ is an $E$-representation}\}
\\&=\bigcap\{\ker U_{\xi,\eta}:(U,H,H_0)\text{ is an $E$-representation},\xi,\eta\in H_0\}
\\&=\pann E_r
\\&=\pann E_0,
\end{align*}
where we used density in the second step.
\end{proof}

\begin{rem}\label{one way}
We can use the above results to give an alternative proof that
$\pann E\subset J_E$:
Since $E_0\subset E$, we have
$\pann E\subset \pann E_0$,
so the inclusion follows from \corref{JE E0}.
\end{rem}

The following corollary is essentially the second half of \cite[Proposition~2.13]{bew2}.

\begin{cor}[\cite{bew2}]\label{suffice}
If $E$ is ordered then $J_E=\pann E$.
\end{cor}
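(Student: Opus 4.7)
The plan is essentially a one-line deduction from the results immediately preceding the statement. By \defnref{pg property}, the hypothesis that $E$ is ordered means exactly that $E_0 = E$, where $E_0 = \spn\{E \cap P(G)\}$. So the first thing I would do is unwind this definition.

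Next, I would invoke \corref{JE E0}, which has already established the general identity $J_E = \pann E_0$ (without any ``ordered'' hypothesis), via the chain $J_E = \pann E_r = \pann E_0$ using \propref{represent}. Substituting $E_0 = E$ into this identity gives $J_E = \pann E$, which is precisely the claim.

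There is really no obstacle here: the statement is a formal corollary in which all the work has already been absorbed into the upstream results. The only thing worth noting in the write-up is that the reverse inclusion $\pann E \subset J_E$ already follows from \remref{one way} (or from the direct argument recalled just after \qref{JE}), so the ``ordered'' hypothesis is only needed for the nontrivial direction $J_E \subset \pann E$. In fact, combining $E_0 \subset E$ (automatic) with \corref{JE E0} always yields $\pann E \subset \pann E_0 = J_E$; the ``ordered'' hypothesis forces the opposite inclusion $\pann E_0 \subset \pann E$ to also hold, completing the equality.

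So the proof I would write is simply: ``Since $E$ is ordered, $E = E_0$, and then by \corref{JE E0}, $J_E = \pann E_0 = \pann E$.''
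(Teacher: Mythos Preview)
Your proof is correct. The paper's proof is slightly different in presentation: rather than citing \corref{JE E0} directly, it gives a self-contained element-chasing argument. Specifically, the paper notes that $\pann E\subset J_E$ is already known, then for the reverse inclusion takes $a\in J_E$ and $f\in E$, uses the ordered hypothesis (together with $E_0=E_r$ from \propref{represent}) to write $f=U_{\xi,\eta}$ for some $E$-representation $(U,H,H_0)$ with $\xi,\eta\in H_0$, and concludes $f(a)=\langle U(a)\xi,\eta\rangle=0$ since $a\in\ker U$. Your route via \corref{JE E0} is cleaner given that this corollary has already been established, and indeed the paper's direct argument is really just re-deriving the relevant special case of \corref{JE E0}. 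Both approaches are equally valid and rest on the same underlying idea.
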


\begin{proof}
Since we already know that $\pann E\subset J_E$,
it suffices to show that $J_E\subset \pann E$.
Let $a\in J_E$ and $f\in E$.
By assumption we can choose an $E$-representation $(U,H,H_0)$ and $\xi,\eta\in H_0$ such that $f=U_{\xi,\eta}$.
Since $a\in \ker U$, we have
\[
0=\<U(a)\xi,\eta\>=U_{\xi,\eta}(a)=f(a).
\]
Therefore $a\in \pann E$.
\end{proof}

We now recover \cite[Corollary~2.14]{bew2} (with essentially the same proof), which perfects \corref{suffice}:

\begin{cor}[\cite{bew2}]\label{equivalent}
$J_E=\pann E$ if and only if $E$ is weakly ordered.
\end{cor}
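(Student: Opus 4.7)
The proof should be essentially immediate from results already in hand. My plan is to combine \corref{JE E0} with the earlier characterization of weak orderedness in terms of preannihilators.

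First I would recall that \corref{JE E0} tells us $J_E = \pann E_0$, where $E_0 = \spn\{E\cap P(G)\}$. So the identity $J_E = \pann E$ is equivalent to $\pann E_0 = \pann E$. Then I would invoke the lemma proved just after \defnref{pg property}, which states that $E$ is weakly ordered if and only if $\pann E = \pann E_0$. Chaining these two equivalences gives the result.

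The one-line proof is: by \corref{JE E0} we have $J_E = \pann E_0$, and $\pann E_0 = \pann E$ holds precisely when the weak*-closures of $E_0$ and $E$ coincide (Hahn-Banach), i.e., precisely when $E_0$ is weak*-dense in $E$ — which is the definition of weakly ordered.

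There is no real obstacle here; the work has already been done in establishing \propref{represent} and \corref{JE E0}. The only thing worth being careful about is that the weak*-closure of $E$ equals $E$ itself only when $E$ is weak*-closed, but since we are comparing preannihilators (not closures), the correct statement is simply that $\pann E = \pann E_0 \iff \bar E = \bar{E_0} \iff E_0$ is weak*-dense in $E$ (using $E_0 \subset E$), which is exactly weak orderedness.
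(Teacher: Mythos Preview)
Your proposal is correct and follows essentially the same approach as the paper: invoke \corref{JE E0} to rewrite $J_E=\pann E$ as $\pann E_0=\pann E$, then use Hahn--Banach (equivalently, the lemma characterizing weak orderedness via preannihilators) together with $E_0\subset E$ to conclude. The paper's proof is the same chain of equivalences, stated slightly more tersely.
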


\begin{proof}
By \corref{JE E0}, $J_E=\pann E$ if and only if $\pann E=\pann E_0$,
which in turn is equivalent to $\bar E=\bar{E_0}$ (where the bars denote weak*-closures), and the result follows since $E_0\subset E$.
\end{proof}

\begin{rem}\label{contain ag}
\cite[Lemma~3.14]{graded} (which is correctly proved)
says
that if $E$ is a nonzero $G$-invariant ideal of $B(G)$
then $E$ contains a dense ideal of $A(G)$ contained in $C_c(G)$,
and consequently
then the norm closure of $E$ contains $A(G)$,
and the weak* closure contains $B_r(G)$.\footnote{It might be worthwhile mentioning that there was a slight redundancy in our argument: it was not necessary to ensure that the ideal separates points in $G$.}
Thus we always have $\pann E\subset \ker \lambda$.
Suppose that $E_0\ne \{0\}$.
Then $E_0$ is also a nonzero $G$-invariant ideal of $B(G)$,
so by the same argument
it follows that the (perhaps) smaller ideal $E_0$ is still weak* dense in $B_r(G)$, and so
\[
\pann E\subset \pann E_0=J_E\subset \pann B_r(G)=\ker \lambda.
\]
Thus
if $E_0\ne \{0\}$ and
$\pann E=\ker\lambda$ then $E$ is weakly ordered.
\end{rem}

\begin{q}\label{exist}
If $E$ is a nonzero $G$-invariant ideal of $B(G)$,
does $G$ have a nonzero $E$-representation?
Equivalently, is the subspace $E_r$ of $E$ nontrivial?
Bizarrely, this question is apparently open for general locally compact groups.
\end{q}

For $G$ discrete, the answer is yes:

\begin{prop}
If $G$ is discrete, then $\lambda$ is an $E$-representation.
\end{prop}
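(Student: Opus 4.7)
The plan is to show that the triple $(\lambda,\ell^2(G),C_c(G))$ qualifies as an $E$-representation, where $C_c(G)$ denotes the finitely supported functions on the discrete group $G$, regarded as a dense subspace of $\ell^2(G)$. The whole argument reduces to checking two things: that coefficient functions $\lambda_{\xi,\eta}$ with $\xi,\eta\in C_c(G)$ land in $C_c(G)$, and that $C_c(G)$ itself lies inside $E$.

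The first reduction is a routine calculation: for $\xi,\eta\in C_c(G)$,
\[
\lambda_{\xi,\eta}(y) = \langle \lambda_y\xi,\eta\rangle = \sum_{t\in G}\xi(y^{-1}t)\overline{\eta(t)},
\]
and this is nonzero only for $y$ in the finite set $\textup{supp}(\eta)\cdot\textup{supp}(\xi)^{-1}$, so $\lambda_{\xi,\eta}\in C_c(G)$.

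For the containment $C_c(G)\subset E$, I would invoke \remref{contain ag}, which supplies a nonzero $f\in E\cap A(G)\cap C_c(G)$. Writing $f=\sum_{i=1}^n c_i\chi_{\{x_i\}}$ with each $c_i\ne 0$, and noting that each $\chi_{\{x\}}$ lies in $B(G)$ (since $\chi_{\{e\}}=\lambda_{\delta_e}$ is a positive-definite coefficient function of $\lambda$ and its left translates are the other $\chi_{\{y\}}$), pointwise multiplication by $\chi_{\{x_1\}}\in B(G)$ yields $\chi_{\{x_1\}}\cdot f = c_1\chi_{\{x_1\}}\in E$ because $E$ is an ideal of $B(G)$. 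Translation invariance of $E$ then propagates this to every point mass, since $y\cdot\chi_{\{x_1\}} = \chi_{\{yx_1\}}$, and consequently $C_c(G)\subset E$ by taking finite linear combinations.

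The spot where discreteness is indispensable is the extraction of an individual point mass $\chi_{\{x\}}$ from an element of $E\cap C_c(G)$ via the ideal property of $E$: in a non-discrete group $\chi_{\{x\}}$ is not even continuous, let alone a member of $B(G)$, so this exact maneuver has no analogue. That is roughly why \qref{exist} remains open for general locally compact $G$, while the discrete case is quite transparent.
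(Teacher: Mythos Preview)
Your proof is correct and follows the same line as the paper's, which simply records (citing Brown--Guentner) that $E\supset c_c(G)$ when $G$ is discrete, so that $(\lambda,\ell^2(G),c_c(G))$ is an $E$-representation. One small simplification: the detour through \remref{contain ag} is unnecessary, since for \emph{any} nonzero $f\in E$ you can pick $x\in G$ with $f(x)\ne 0$ and use $\chi_{\{x\}}\cdot f = f(x)\chi_{\{x\}}\in E$ directly.
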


\begin{rem}
As (essentially) mentioned in \cite{bg}, if $G$ is discrete then $E\supset c_c(G)$, and it follows that $\lambda$ is an $E$-representation.
\end{rem}

\begin{rem}
Trivially, if $E$ is weakly ordered, then $G$ has a nonzero $E$-representation --- and conversely if $G$ is amenable (see \propref{amenable} below), although in some sense the amenable case is of no interest with regard to these matters.
Also, if $F$ is a nonzero $G$-invariant ideal of $B(G)$ containing $E$, and if $G$ has a nonzero $E$-representation, then trivially it has a nonzero $F$-representation.
\end{rem}

\begin{prop}\label{amenable}
Let $G$ be amenable, and let $E$ be a nonzero $G$-invariant ideal of $B(G)$.
If $G$ has a nonzero $E$-representation, then $E$ is weakly ordered.
\end{prop}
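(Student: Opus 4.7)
The plan is to assemble the proof out of three ingredients already in place: the identification $E_0=E_r$ from \propref{represent}, the ``contains $B_r(G)$'' phenomenon from \remref{contain ag}, and the amenability collapse $B_r(G)=B(G)$.

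First, I would translate the hypothesis that $G$ has a nonzero $E$-representation into the statement $E_r\neq\{0\}$. By \propref{represent} this is equivalent to $E_0\neq\{0\}$. Since $E$ is assumed to be a $G$-invariant ideal of $B(G)$, one checks (as the authors assert in \remref{contain ag}) that $E_0$ is again a $G$-invariant ideal of $B(G)$: indeed, each element of $E\cap P(G)$ is sent by the $B(G)$-action to a sum of elements of $E\cap P(G)$ via the $B(G)$-bimodule structure and the polarization identity used in \propref{represent}, and $G$-invariance is automatic since $P(G)$ is $G$-invariant and so is $E$. Thus $E_0$ is a nonzero $G$-invariant ideal of $B(G)$.

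Next, by the argument of \remref{contain ag} (which invokes \cite[Lemma~3.14]{graded}), every nonzero $G$-invariant ideal of $B(G)$ has weak*-closure containing $B_r(G)$. Applied to $E_0$, this yields $\overline{E_0}\wkstcl\supset B_r(G)$.

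Finally I bring in amenability. Since $G$ is amenable, the regular representation $\lambda\: C^*(G)\to C^*_r(G)$ is an isomorphism, so $\ker\lambda=\{0\}$, and consequently the weak*-closed subspace $B_r(G)=(\ker\lambda)\ann$ coincides with all of $B(G)$. Combining this with the previous step gives $\overline{E_0}\wkstcl=B(G)\supset E$, so $E_0$ is weak*-dense in $E$, i.e., $E$ is weakly ordered. Equivalently, by \corref{equivalent}, $J_E=\pann E$.

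The only step that requires any thought is verifying that $E_0$ is itself a $G$-invariant ideal (needed to apply \remref{contain ag} to $E_0$ rather than to $E$); everything else is direct quotation of previously established results, with amenability serving purely to identify $B_r(G)$ with $B(G)$ so that the weak* density established in $B_r(G)$ upgrades to weak* density in $E$.
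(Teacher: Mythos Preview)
Your proof is correct and follows essentially the same route as the paper's: both use the hypothesis (via \propref{represent}) to get $E_0\ne\{0\}$, apply \remref{contain ag} to $E_0$ to see that its weak*-closure contains $B_r(G)$, and then invoke amenability. The only cosmetic difference is that the paper phrases the amenability step on the preannihilator side (observing $\pann E=\ker\lambda=\{0\}$ and then quoting the last sentence of \remref{contain ag}), whereas you work directly with weak*-closures via $B_r(G)=B(G)$; these are dual formulations of the same argument.
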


\begin{proof}
By hypothesis, $E_0\ne \{0\}$.
As we point out in \remref{contain ag}, $\pann E\subset \ker\lambda$. Since $G$ is amenable, $\ker\lambda=\{0\}$. Consequently, $\pann E=\ker\lambda$.
Consulting \remref{contain ag} again we conclude that
$E$ is weakly ordered.
\end{proof}

\begin{ex}\label{jack}
Taking $G$ to be the circle group $\T$,
we will give an example of a weak*-dense 
subspace $E$ for which $E_0=\{0\}$,
and so 
$E$ fails to be weakly ordered
in a very strong way.
Note that
our example is 
neither $G$-invariant nor an ideal of $B(G)$.
Let
\[
E=\spn\{z^{n+1}-z^n:n\in\N\}\subset B(\T).
\]
Then $E$ is a 
subspace of $B(\T)$,
and we claim that $E\cap P(\T)=\{0\}$.
By Bochner's theorem, it suffices to show that the Fourier transform
\[
\what E=\{\what f:f\in E\}=\spn\{\delta_{n+1}-\delta_n:n\in\N\}
\]
contains no nonzero positive measure on $\Z$.
Let
\[
\mu=\sum_{n=-k}^kc_n(\delta_{n+1}-\delta_n),
\]
and assume that $\mu$ is positive and nonzero.
Clearly $k>0$, and without loss of generality $c_k\ne 0$.
For each $n\in\Z$ let $p_n=\Chi_{\{n\}}\in c_0(\Z)$.
Then
\[
0\le \<p_{k+1},f\>=c_k,
\]
so $c_k>0$.
Next,
\[
0\le \<p_k,f\>=c_{k-1}-c_k,
\]
so $c_{k-1}\ge c_k$.
Continuing in this way, we find
\[
c_{-k}\ge c_{-k+1}\ge \cdots \ge c_k>0.
\]
But
\[
0\le \<p_{-k},f\>=-c_{-k},
\]
giving a contradiction.
Note that $\what E$ is weak*-dense in the space $\ell^1(\Z)$ of complex measures on $\Z$,
and so $E$ is weak*-dense in $B(\T)$.
We thank J.~Spielberg for fruitful discussion that led to this example.
\end{ex}

\begin{rem}
Let $E$ be a nonzero $G$-invariant ideal of $B(G)$.
Suppose that there exists at least one nonzero $E$-representation $U$ of $G$, equivalently $E_0\ne \{0\}$.
If $V$ is any representation of $G$, then $U\otimes V$ is an $E$-representation since $E$ is an ideal.
In particular, $U\otimes \lambda$ is an $E$-representation.
By Fell's trick, $U\otimes \lambda$ is unitarily equivalent to a multiple of $\lambda$.
Thus this multiple of $\lambda$ is an $E$-representation.
We would like to conclude that $\lambda$ is an $E$-representation,
but this seems to require that the class of $E$-representations be closed under taking subrepresentations.
It is not clear to us why this would be true.
But if it were then we could conclude that
$E_0$ contains every convolution $\xi*\eta$ for $\xi,\eta\in L^2(G)$.
Note that our assumptions imply that $E_0$ is also a nonzero $G$-invariant ideal of $B(G)$,
so by \remref{contain ag} we already knew --- for different reasons --- that the norm closure of $E_0$ contains $A(G)$.
\end{rem}

\section{The classical ideals}\label{classical}

Inspired by the work of Brown and Guentner \cite{bg},
the main examples of $G$-invariant subspaces $E\subset B(G)$ that we want to include are actually ideals:
\begin{enumerate}
\item $C_c(G)\cap B(G)$;
\item $C_0(G)\cap B(G)$;
\item $L^p(G)\cap B(G)$.
\end{enumerate}

\begin{prop}
The ideals \textup{(1)} and \textup{(2)} are ordered.
\end{prop}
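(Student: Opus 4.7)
For (2), the plan is simply to invoke \propref{closed}: since the $B(G)$-norm dominates the supremum norm on $G$, any $B(G)$-norm limit of functions in $C_0(G)$ still vanishes at infinity, so $C_0(G)\cap B(G)$ is norm-closed in $B(G)$, and \propref{closed} applies (as already noted in the remark just after it).

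For (1), $E=C_c(G)\cap B(G)$ is not norm-closed, so a more delicate argument is needed. The plan is first to embed $E$ into the Fourier algebra $A(G)$ and then polarize there. Let $f\in E$ with compact support $K$. By regularity of $A(G)$ as a Banach algebra of functions on $G$, there exists $u\in A(G)\cap C_c(G)$ with $u\equiv 1$ on a neighborhood of $K$; since $A(G)$ is an ideal of $B(G)$, the pointwise product $uf$ lies in $A(G)$, and the equality $uf=f$ gives $f\in A(G)\cap C_c(G)$.

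Next, by the Eymard representation $A(G)=\{\xi*\wilde\eta:\xi,\eta\in L^2(G)\}$, one writes $f=\xi*\wilde\eta$. The critical technical step is to choose the factors in $L^2(G)\cap C_c(G)$; granting this, polarization yields
\[
4f=\sum_{k=0}^3 i^k(\xi+i^k\eta)*\wilde{(\xi+i^k\eta)},
\]
and each summand has the form $\zeta*\wilde\zeta$ with $\zeta\in L^2(G)\cap C_c(G)$, hence is a positive definite function supported in the compact set $\text{supp}(\zeta)\cdot\text{supp}(\zeta)^{-1}$. Thus each summand lies in $E\cap P(G)$, so $f\in\spn\{E\cap P(G)\}=E_0$, which gives that $E$ is ordered.

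The main obstacle is the compactly supported factorization in $A(G)$: Eymard's theorem as usually stated only provides $L^2$ factors, so one must invoke or prove a refined factorization producing compactly supported $\xi,\eta$. A natural route is to use that $A(G)\cap C_c(G)$ is a dense ideal of $A(G)$ (with, say, a compactly supported approximate unit) combined with a Cohen--Hewitt-style factorization argument; but turning approximation into the \emph{exact} identity $f=\xi*\wilde\eta$ with both factors in $C_c(G)$ is the subtle point where the real work lies.
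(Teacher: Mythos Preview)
Your treatment of (2) matches the paper exactly: invoke \propref{closed} after observing that $C_0(G)\cap B(G)$ is norm-closed in $B(G)$.

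For (1), the paper does not argue from scratch; it simply cites \cite[Proposition~3.4]{eymard}, which gives directly that
\[
C_c(G)\cap B(G)=\spn\{C_c(G)\cap P(G)\}.
\]
Your plan is essentially an outline of how one would \emph{prove} Eymard's proposition: reduce to $A(G)$ via regularity, factor as $\xi*\wilde\eta$, then polarize. You have correctly identified the one nontrivial step --- obtaining the factorization $f=\xi*\wilde\eta$ with $\xi,\eta\in C_c(G)$ --- and you are right that Cohen--Hewitt-style arguments or approximation alone do not obviously produce an exact compactly supported factorization. But this is precisely the content of Eymard's result; in fact \cite[(3.4)]{eymard} establishes that every element of $A(G)\cap C_c(G)$ is a finite sum of terms $k*\wilde h$ with $k,h\in C_c(G)$, after which your polarization step finishes the job. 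So your proposal is not wrong, just incomplete at exactly the point where the paper defers to the literature; rather than trying to push the factorization argument through yourself, you should simply cite Eymard.
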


\begin{proof}
The first case is very well-known: by \cite[Proposition~3.4]{eymard},
\begin{align*}
C_c(G)\cap B(G)
&=\spn\{C_c(G)\cap P(G)\}
\\&=\spn\{C_c(G)\cap B(G)\cap P(G)\}.
\end{align*}
The second case is immediate from \propref{closed}.
\end{proof}

However, for ideals of type (3) things are murky.
We do not even know whether
they are all weakly ordered.
Since this is an important source of ideals of $B(G)$,
we examine this more closely.
For $1\le p\le \infty$ let
\[
E^p=L^p(G)\cap B(G).
\]
In particular, $E^\infty=B(G)$.

Note that if $G$ is unimodular then every $E^p$ is at least self-adjoint in $B(G)$.
However, this does not hold generally, as the following example shows.

\begin{ex}\label{counterex}
Here we
show that there are groups for which the ideal $E^1$ is not ordered,
by showing that it is not self-adjoint in $B(G)$.
It seems plausible, but not clear, that this carries over to arbitrary $p<\infty$ by embellishing the computations.

We want to find $f\in E^1$ such that $\wilde f\notin E^1$.
Let $g,h\in L^1(G)\cap L^2(G)$ be nonnegative, and put
\[
f(x)=\<\lambda_x g,h\>=\int f(x\inv y)g(y)\,dy.
\]
Then $f\in B(G)$, and
\begin{align*}
\|f\|_1
&=\int \int g(x\inv y)h(y)\,dy\,dx
\\&=\int \int g(x\inv)\,dx\,h(y)\,dy\righttext{(after $x\mapsto yx$)}
\\&=\|g \Delta\inv\|_1\|h\|_1,
\end{align*}
where here we write $\Delta\inv$ for the reciprocal $1/\Delta$.

On the other hand, $\wilde f\in B(G)$, and
\begin{align*}
\wilde f(x)
&=f(x\inv)
=\int g(xy)h(y)\,dy,
\end{align*}
so
\begin{align*}
\|\wilde f\|_1
&=\int \int g(xy)\,dx\,h(y)\,dy
\\&=\int \int g(x)\,dx\,\Delta(y)\inv h(y)\,dy
\\&=\|g\|_1\|\Delta\inv h\|_1.
\end{align*}
Now, we impose further assumptions on $g,h$:
\begin{align*}
&\|g\|_1=\infty
\\
&\|g\Delta\inv\|_1<\infty
\\
&0<\|h\|_1,
\|\Delta\inv h\|_1<\infty.
\end{align*}
Then $f\in E^1$ and $\wilde f\notin E^1$,
so $E^1$ is not ordered.

We can easily choose a suitable $h$ --- for instance, let $h\in C_c(G)$ be nonnegative and not identically 0.
It seems likely that we can also choose a suitable $g$ in any nonunimodular group.
For a specific example,
let $G$ be the $ax+b$ group $\R^+\times\R$, with operation
\[
(x,y)(u,v)=(xu,xv+y).
\]
Recall that the Haar measure and modular function are given by
\begin{align*}
d(x,y)&=\frac{dx\,dy}{x^2}
\\
\Delta(x,y)&=\frac 1x.
\end{align*}
We look for $g$ of the form
\[
g(x,y)=\phi(x)\psi(y),
\]
with $\phi,\psi\ge 0$.
We need $g\in L^2$,
which means
\[
\infty>\int_G g(x,y)^2\,d(x,y)=\int_0^\infty \frac{\phi(x)^2}{x^2}\,dx\int_\R \psi(y)^2\,dy.
\]
We also need $g\Delta\inv$ integrable but $g$ nonintegrable,
which means
\[
\infty=\int_G g(x,y)\,d(x,y)=\int_0^\infty \frac{\phi(x)}{x^2}\,dx\int_\R \psi(y)\,dy
\]
and
\[
\infty>\int_G \frac{g(x,y)}{\Delta(x,y)}\,d(x,y)=\int_0^\infty \frac{\phi(x)}{x}\,dx\int_\R \psi(y)\,dy.
\]
These conditions are all met with, e.g.,
\begin{align*}
\phi(x)&=xe^{-x}
\\
\psi(y)&=e^{-y^2}.
\end{align*}

\end{ex}

\begin{q}\label{PG prop}
When
is $E^p$
\begin{enumerate}
\item
ordered?

\item
weakly ordered?
\end{enumerate}
\end{q}

Trivially:

\begin{prop}
$E^\infty$ is ordered.
\end{prop}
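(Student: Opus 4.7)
The plan is very short, because $E^\infty$ is actually not a proper subspace at all.

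First I would observe that every element of $B(G)$ is a bounded continuous function on $G$, so $B(G) \subset C_b(G) \subset L^\infty(G)$. Consequently
\[
E^\infty = L^\infty(G) \cap B(G) = B(G).
\]
In particular $E^\infty$ is (trivially) closed in the norm of $B(G)$, so \propref{closed} applies directly and yields the conclusion that $E^\infty$ is ordered.

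If one prefers a self-contained argument not routed through \propref{closed}, the polarization identity does the job on its own. Any $f \in B(G)$ is a coefficient function $f = U_{\xi,\eta}$ for some unitary representation $U$ of $G$ on a Hilbert space $H$ and vectors $\xi,\eta \in H$, and then
\[
f = U_{\xi,\eta} = \frac{1}{4}\sum_{k=0}^{3} i^k\, U_{\xi + i^k\eta},
\]
exhibiting $f$ as a linear combination of four elements of $P(G) \subset B(G)$.

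There is no real obstacle here; the only thing worth flagging is that the statement is essentially a tautology once one notes the inclusion $B(G) \subset L^\infty(G)$, and that the fact $B(G) = \spn P(G)$ is classical (and is precisely the special case of \propref{closed} with $E = B(G)$).
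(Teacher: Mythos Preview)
Your proposal is correct and matches the paper's approach: the paper simply labels this ``Trivially'' with no further argument, and you have spelled out exactly why it is trivial---namely $E^\infty = B(G)$ because $B(G)\subset L^\infty(G)$, whereupon either \propref{closed} or the classical identity $B(G)=\spn P(G)$ via polarization finishes immediately.
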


\begin{prop}\label{E2 weak}
If $1\le p\le 2$ then $E^p$ is weakly ordered.
\end{prop}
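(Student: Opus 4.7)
The plan is to verify the sufficient condition recorded in \remref{contain ag}: $E^p$ is weakly ordered provided that $(E^p)_0 \ne \{0\}$ and $\pann E^p = \ker\lambda$. I apply this criterion with $E = E^p$.

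First I would check $(E^p)_0 \ne \{0\}$. For any nonzero $\xi \in C_c(G) \subset L^2(G)$, the convolution $\xi * \wilde\xi$ is a coefficient of $\lambda$ (hence lies in $P(G)$) and is compactly supported, so \emph{a fortiori} lies in $L^p(G)$. Thus $\xi * \wilde\xi$ is a nonzero element of $E^p \cap P(G) \subset (E^p)_0$.

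Next, $\pann E^p \subset \ker\lambda$ is immediate from \remref{contain ag} since $E^p$ is itself a nonzero $G$-invariant ideal of $B(G)$. The reverse inclusion, equivalent by weak*-closedness of $B_r(G) = (\ker\lambda)\ann$ to the containment $E^p \subset B_r(G)$, is where the real work lies. A convenient first move is to reduce to the case $p=2$: since $B(G) \subset L^\infty(G)$, the interpolation inequality $|f|^2 \le \|f\|_\infty^{2-p}|f|^p$ (valid for $1 \le p \le 2$) gives $E^p \subset E^2 = L^2(G) \cap B(G)$.

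The crux is then to establish $L^2(G) \cap B(G) \subset B_r(G)$ for every locally compact group $G$, and this is the main obstacle. A clean way to cite this is Wiersma's identification of the $L^p$-Fourier-Stieltjes algebra $B_p(G)$ with $B_r(G)$ for $1 \le p \le 2$ \cite{wiersmalp}. At the level of positive-definite functions, the idea is that for $f \in P(G) \cap L^2(G)$ one has $|f|^2 \in P(G) \cap L^1(G) \subset A(G)$ by Godement's theorem, and a further analysis of the GNS representation of $f$ shows it is weakly contained in $\lambda$; the general (not necessarily positive-definite) case follows by polarization. With $E^p \subset B_r(G)$ in hand, $\pann E^p \supset \pann B_r(G) = \ker\lambda$ gives $\pann E^p = \ker\lambda$, and the proposition follows from \remref{contain ag}.
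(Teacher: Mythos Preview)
Your proof is correct and follows essentially the same route as the paper's: both verify the criterion from \remref{contain ag} by establishing $\pann E^p = \ker\lambda$ (together with $(E^p)_0 \ne \{0\}$). The only difference is packaging: the paper simply cites \cite[Proposition~4.2]{graded} for $\pann E^p = \ker\lambda$ and leaves the nonvanishing of $(E^p)_0$ implicit, whereas you supply both ingredients explicitly --- your reduction to $p=2$ and the inclusion $L^2(G)\cap B(G)\subset A(G)$ is precisely the content of \propref{magnus} in this paper, so you could cite that instead of the sketch via Godement's theorem and Wiersma.
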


\begin{proof}
\cite[Proposition~4.2]{graded} says that
$\pann E^p=\ker\lambda$,
and the result follows.
\end{proof}

\begin{cor}\label{weaker}
For some groups, there are $G$-invariant ideals of $B(G)$ that are weakly ordered but not ordered.
\end{cor}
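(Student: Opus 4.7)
The plan is to observe that the corollary is an immediate combination of \exref{counterex} and \propref{E2 weak}. Specifically, I would take $G$ to be the $ax+b$ group constructed in \exref{counterex}, and consider the ideal $E^1 = L^1(G)\cap B(G)$.

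First, I would invoke \propref{E2 weak} with $p=1$ to conclude that $E^1$ is weakly ordered. Next, I would appeal to \exref{counterex}: there we exhibited a function $f\in E^1$ with $\wilde f\notin E^1$, so $E^1$ fails to be self-adjoint. Since every ordered subspace of $B(G)$ is necessarily self-adjoint (as noted in the discussion following \defnref{pg property}, using that duals of $C^*$-algebras carry an involution and $P(G)$ is stable under $f\mapsto \wilde f$), the ideal $E^1$ cannot be ordered.

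There is essentially no obstacle: the work has already been done in \exref{counterex} and \propref{E2 weak}. The proof amounts to a single paragraph assembling these two facts.

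\begin{proof}
Let $G$ be the $ax+b$ group of \exref{counterex}, and let $E = E^1 = L^1(G)\cap B(G)$. By \propref{E2 weak}, $E$ is weakly ordered. On the other hand, \exref{counterex} produces $f\in E$ with $\wilde f\notin E$, so $E$ is not self-adjoint and therefore not ordered.
\end{proof}
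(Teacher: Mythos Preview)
Your proof is correct and matches the paper's intended argument: the corollary is stated without proof immediately after \propref{E2 weak}, and is meant to be read as the combination of that proposition with \exref{counterex}, exactly as you have done.
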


We can at least answer \qref{exist} affirmatively for $E^p$:

\begin{cor}
For every $p$, there is a nonzero $E^p$-representation of $G$.
\end{cor}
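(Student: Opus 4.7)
The plan is to exhibit the left regular representation $\lambda$ of $G$ on $L^2(G)$ as a nonzero $E^p$-representation for every $p \in [1,\infty]$, taking $H_0 = C_c(G)$ as the dense subspace. This is modeled on the discrete case, where $\lambda$ is an $E$-representation whenever $E \supset c_c(G)$; for $E = E^p$ in the general locally compact setting, the subspace $C_c(G)$ will play the analogous role.

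First I would verify that the triple $(\lambda, L^2(G), C_c(G))$ is an $E^p$-representation in the sense of \defnref{E rep}. The subspace $C_c(G)$ is dense in $L^2(G)$, so it only remains to check that $\lambda_{\xi,\eta} \in E^p$ for all $\xi,\eta \in C_c(G)$. From the standard formula
\[
\lambda_{\xi,\eta}(x) = \int_G \xi(x\inv y)\overline{\eta(y)}\,dy,
\]
the integrand vanishes unless both $y \in \textup{supp}(\eta)$ and $x\inv y \in \textup{supp}(\xi)$, so $\lambda_{\xi,\eta}$ is supported on the compact set $\textup{supp}(\eta)\cdot\textup{supp}(\xi)\inv$. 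Thus $\lambda_{\xi,\eta} \in C_c(G)$, and since $C_c(G) \subset L^p(G)$ for every $p \in [1,\infty]$ while $\lambda_{\xi,\eta} \in B(G)$ as a matrix coefficient of a unitary representation, we conclude $\lambda_{\xi,\eta} \in L^p(G) \cap B(G) = E^p$.

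Finally, this representation is nonzero: for any nonzero $\xi \in C_c(G)$, $\lambda_{\xi,\xi}(e) = \|\xi\|_2^2 > 0$, so $\lambda_{\xi,\xi}$ is not identically zero. There is no substantive obstacle in this argument; the whole plan reduces to the routine observation that matrix coefficients of $\lambda$ between compactly supported vectors are themselves compactly supported, combined with the trivial inclusion $C_c(G) \subset L^p(G)$. As an alternative route that leans more on the machinery of the preceding section, one could instead split into cases: for $1 \le p \le 2$, invoke \propref{E2 weak} and the fact that $E^p$ is nonzero to deduce $E^p_0 \ne \{0\}$ (hence $E^p_r \ne \{0\}$ by \propref{represent}), and for $p \ge 2$ use the inclusion $E^2 \subset E^p$ (valid because $B(G) \subset L^\infty(G)$) to reduce to the case $p = 2$.
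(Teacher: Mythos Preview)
Your proof is correct, and in fact you give two arguments. Your main argument---exhibiting $(\lambda,L^2(G),C_c(G))$ directly as an $E^p$-representation via the routine observation that coefficients of $\lambda$ between $C_c(G)$ vectors lie in $C_c(G)\cap B(G)\subset E^p$---is not the route the paper takes. The paper instead uses exactly your ``alternative route'': it invokes \propref{E2 weak} to get a nonzero $E^p$-representation for $p\le 2$, and then the inclusion $E^q\subset E^p$ for $q\le p$ to cover $p>2$.

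Your direct argument is more elementary and more informative: it names $\lambda$ as the $E^p$-representation, uniformly in $p$, and it sidesteps the machinery of weakly ordered ideals entirely. In light of the paper's later remark lamenting that one cannot in general conclude $\lambda$ is an $E$-representation (because $E$-representations are not obviously closed under subrepresentations), your observation that this \emph{can} be done directly for $E=E^p$ is a nice sharpening. The paper's approach, by contrast, is the natural one given the surrounding narrative, where the corollary is meant to illustrate what falls out of \propref{E2 weak}.
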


\begin{proof}
By \propref{E2 weak}, $E^p$ is weakly ordered for all $p\le 2$,
and so has a nonzero $E^p$-representation,
and hence has a nonzero $E^p$ representation for all $p>2$ as well,
because if $p>q$ then $E^p\supset E^q$,
since $B(G)$ consists of bounded functions.
\end{proof}

\begin{rem}
\propref{E2 weak} is also implied by \cite[Proposition~4.4 (i)]{wiersmalp}, which says that $A_{L^p}(G)=A(G)$ for all $p\in [1,2]$.
\end{rem}

\begin{prop}\label{hr}
If $G$ is abelian, then
$E^2$ is ordered.
\end{prop}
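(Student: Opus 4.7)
The plan is to use Pontryagin duality to transport the problem to the dual group $\widehat G$. Since $G$ is abelian, the Fourier--Stieltjes transform $\mu\mapsto\hat\mu$ is an isometric isomorphism of $M(\widehat G)$ onto $B(G)$, and Bochner's theorem identifies the cone of nonnegative finite measures on $\widehat G$ with $P(G)$.

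First I would identify $E^2$ under this transform. Plancherel's theorem gives a unitary identification $L^2(G)\cong L^2(\widehat G)$ that is compatible with the Fourier--Stieltjes transform on the common domain. It follows that $f\in B(G)\cap L^2(G)$ corresponds to a measure of the form $\mu = g\,d\chi$, where $d\chi$ is Haar measure on $\widehat G$ and $g\in L^2(\widehat G)$; since $\mu$ must additionally be a finite measure, $g$ must also lie in $L^1(\widehat G)$. Hence under the transform
\[
E^2 \longleftrightarrow L^1(\widehat G)\cap L^2(\widehat G),
\]
and $E^2\cap P(G)$ corresponds exactly to the cone of nonnegative elements of this intersection.

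Given an arbitrary $g\in L^1(\widehat G)\cap L^2(\widehat G)$, I would then decompose
\[
g = (\Re g)^+ - (\Re g)^- + i(\Im g)^+ - i(\Im g)^-.
\]
Each summand is dominated pointwise by $|g|$, so each lies in $L^1(\widehat G)\cap L^2(\widehat G)$ and is nonnegative. Transferring back via the inverse Fourier--Stieltjes transform exhibits the element $f\in E^2$ corresponding to $g$ as a linear combination of elements of $E^2\cap P(G)$. Therefore $E^2=\spn\{E^2\cap P(G)\}$, i.e., $E^2$ is ordered.

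The step most likely to need care is the identification $E^2\leftrightarrow L^1(\widehat G)\cap L^2(\widehat G)$: one needs to confirm that for a measure $\mu = g\,d\chi$ with $g\in L^1\cap L^2$, the Fourier--Stieltjes transform of $\mu$ agrees almost everywhere with the $L^2$-Plancherel transform of $g$. This is standard, but it is worth verifying explicitly to see that the two Fourier pictures agree on the overlap where both definitions make sense; once this is in hand, everything else is a routine truncation argument on $\widehat G$.
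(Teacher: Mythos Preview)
Your proof is correct and follows essentially the same route as the paper: both transport the problem to $\widehat G$ via the Fourier/Fourier--Stieltjes transform, identify $E^2$ with $L^1(\widehat G)\cap L^2(\widehat G)$, and then decompose an arbitrary element of the latter into its four nonnegative parts, each of which corresponds under Bochner to an element of $E^2\cap P(G)$. The one organizational difference is that the paper isolates your ``delicate step'' --- the inclusion $L^2(G)\cap B(G)\subset A(G)$, which is what makes the measure $\mu$ absolutely continuous with $L^1$ density --- as a separate proposition (\propref{magnus}) valid for \emph{all} locally compact groups and $1\le p\le 2$, proved by an approximation argument (convolving with an approximate identity) rather than by Pontryagin duality; your duality argument recovers exactly the abelian case of that proposition.
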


\begin{proof}
The Fourier transform takes $L^2(\what G)\cap L^1(\what G)$ bijectively onto $L^2(G)\cap A(G)$.
Now, $L^2(\what G)\cap L^1(\what G)$ is the linear span of the nonnegative functions it contains,
so $L^2(G)\cap A(G)$ is the linear span of the positive definite functions it contains.
The result now follows from \propref{magnus} below.
\end{proof}

In the above proof we appealed to the following elementary fact, which is perhaps folklore:

\begin{prop}\label{magnus}
For any locally compact group $G$, if $1\le p\le 2$ then
$L^p(G)\cap B(G)=L^p(G)\cap A(G)$.
\end{prop}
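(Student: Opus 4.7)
The inclusion $L^p(G)\cap A(G)\subseteq L^p(G)\cap B(G)$ is immediate, since $A(G)\subseteq B(G)$. For the reverse inclusion, I would first reduce to the case $p=2$ by interpolation. Every $f\in B(G)$ is bounded (indeed $\|f\|_\infty\le \|f\|_{B(G)}$), so for any $f\in L^p(G)\cap B(G)$ with $1\le p\le 2$ we have the pointwise estimate $|f|^2\le \|f\|_\infty^{2-p}|f|^p$, whence
\[
\|f\|_2^2\le \|f\|_\infty^{2-p}\|f\|_p^p<\infty,
\]
so $f\in L^2(G)$. Consequently it suffices to prove $L^2(G)\cap B(G)\subseteq A(G)$.

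For this main step, take $f\in L^2(G)\cap B(G)$ and write $f=U_{\xi,\eta}$ for some unitary representation $(U,H)$ of $G$. I would invoke Fell's absorption theorem: $U\otimes\lambda\cong\lambda^{\oplus\dim H}$, so every matrix coefficient of $U\otimes\lambda$ lies in $A(G)$. Exploiting the fact that $f$ is itself a vector in the Hilbert space $L^2(G)$ of $\lambda$, for each $g\in L^2(G)$ the function
\[
x\mapsto f(x)\,\langle\lambda_xf,g\rangle=(U\otimes\lambda)_{\xi\otimes f,\,\eta\otimes g}(x)
\]
belongs to $A(G)$. Thus the product of $f$ with every coefficient of $\lambda$ of the form $\langle\lambda_\cdot f,g\rangle$ lies in $A(G)$; this is the crucial place where the hypothesis $f\in L^2(G)$ is used.

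To pass from these products to the conclusion $f\in A(G)$, I would invoke Godement's classical theorem: any continuous positive-definite function on $G$ lying in $L^2(G)$ automatically belongs to $A(G)$ (explicitly, as $\psi*\tilde\psi$ for some $\psi\in L^2(G)$). Combining this with the polarization identity
\[
f=U_{\xi,\eta}=\tfrac14\sum_{k=0}^3i^kU_{\xi+i^k\eta}
\]
would express $f$ as a linear combination of four positive-definite coefficients, each of which would lie in $A(G)$ provided it lies in $L^2(G)$. The main obstacle is precisely this last point: $L^2$-integrability of the off-diagonal coefficient $f=U_{\xi,\eta}$ does not automatically transfer to the diagonal coefficients $U_{\xi+i^k\eta}$. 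To overcome this, one chooses $U$ to be a minimal representation realizing $f$ (for instance, the one provided by Arsac's structure theorem), so that $\xi$ and $\eta$ are cyclic vectors in $H$; the Fell-absorption identity from the previous paragraph then transfers the $L^2$-integrability from $f$ to the cyclic subspaces spanned by $\xi+i^k\eta$, placing each $U_{\xi+i^k\eta}$ in $L^2(G)$ and completing the proof via Godement.
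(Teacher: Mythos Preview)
Your reduction to the case $p=2$ is correct and is exactly what the paper does.

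After that, however, the argument does not go through. The Fell-absorption paragraph is correct as stated, but it is never actually used: knowing that $x\mapsto f(x)\,\langle\lambda_xf,g\rangle$ lies in $A(G)$ does not by itself bring you closer to $f\in A(G)$, and you never return to it. The real work is supposed to be done by Godement plus polarization, and there you correctly identify the obstacle --- the diagonal coefficients $U_{\xi+i^k\eta}$ need not lie in $L^2(G)$ --- but your proposed fix is not a fix. Choosing $U$ minimal (so that $\xi$ and $\eta$ are cyclic, or $\|f\|_{B(G)}=\|\xi\|\,\|\eta\|$) gives you no control whatsoever over $\|U_\xi\|_2$ or $\|U_\eta\|_2$; these can be infinite even when $U_{\xi,\eta}\in L^2(G)$. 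The sentence ``the Fell-absorption identity \ldots\ transfers the $L^2$-integrability from $f$ to the cyclic subspaces'' has no content: nothing in the Fell identity $f\cdot\lambda_{f,g}\in A(G)$ says anything about the $L^2$-norm of $U_{\xi}$ or $U_{\eta}$.

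The paper avoids polarization entirely. It uses the hypothesis $f\in L^2(G)$ in a different way: for $g\in L^1(G)\cap L^2(G)$, the convolution
\[
\psi_g(x)=\langle\lambda_xg,\bar f\rangle=\int g(x^{-1}y)f(y)\,dy
\]
is on the one hand a coefficient of $\lambda$ (with vectors $g,\bar f\in L^2(G)$), hence in $A(G)$, and on the other hand equals $\langle U_xU_g\xi,\eta\rangle$ by expanding $f(y)=\langle U_y\xi,\eta\rangle$ under the integral. Letting $g$ run through an approximate identity gives $U_g\xi\to\xi$ in $H$, so $\psi_g\to U_{\xi,\eta}=f$ in $B(G)$-norm, and $A(G)$ is norm-closed. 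This is the missing idea: exhibit $f$ directly as a $B(G)$-limit of $A(G)$-functions, rather than trying to decompose it into positive-definite $L^2$ pieces.
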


\begin{proof}
It suffices to show that if $f\in L^p(G)\cap B(G)$ then $f\in A(G)$.
Since $B(G)$ consists of bounded functions, we have
\[
\bigl(L^p(G)\cap B(G)\bigr)\subset \bigl(L^2(G)\cap B(G)\bigr)
\]
so it suffices to prove the result for the special case $p=2$.
Choose a representation $U$ of $G$ and vectors $\xi,\eta$ such that $f=U_{\xi,\eta}$.
For any $g\in L^1(G)\cap L^2(G)$,
define $\psi_g\in A(G)$ by
\begin{align*}
\psi_g(x)
&=\<\lambda_xg,\bar f\>
\\&=\int g(x\inv y)f(y)\,dy
\\&=\int g(y)\<U_{xy}\xi,\eta\>\,dy
\\&=\int \<U_xU_yg(y)\xi,\eta\>\,dy
\\&=\<U_xU_g\xi,\eta\>.
\end{align*}
It follows that for any $a\in C^*(G)$ we have
\[
\psi_g(a)
=\<U_aU_g\xi,\eta\>.
\]
Letting $U_g\xi\to \xi$ in the norm of the Hilbert space of $U$, we have $\psi_g\to f$ in the $B(G)$-norm,
and therefore $f\in A(G)$.
\end{proof}

\begin{rem}
We conjecture that the conclusion of \propref{hr} holds for all unimodular groups.
\end{rem}

\begin{rem}\label{no magnus p>2}
Here we show that \propref{magnus} does not extend to $p>2$:
for $G=SL(2,\R)$,
by \cite[Theorem~7.2]{wiersmalp}
\[
\bar{L^p(G)\cap B(G)}\wkstcl\ne \bar{L^2(G)\cap B(G)}\wkstcl,
\]
whereas
\[
\bar{L^p(G)\cap A(G)}\wkstcl=B_r(G)=\bar{L^2(G)\cap B(G)}\wkstcl.
\]
\end{rem}

\begin{rem}
In view of the discussion in this section, it might be worthwhile to consider three possible (re-)definitions of the $G$-invariant ideal $E^p$ of $B(G)$:
\begin{enumerate}
\item $L^p(G)\cap B(G)$;

\item $\{f\in B(G):f,\wilde f\in L^p(G)\}$;

\item $\spn\{L^p(G)\cap P(G)\}$.
\end{enumerate}

(1) is of course how we defined the notation $E^p$ in this paper,
and (2) is the self-adjoint part of (1).
Since (3) is always self-adjoint, we obviously have (1) $\supset$ (2) $\supset$ (3).

(3) is the convention used in \cite{bew}, with good reason.

By our definition, (1) = (3) only when (1) is ordered, which we have seen does not always occur;
for example, it happens for $p=2$ and $G$ unimodular (in which case in fact (1) = (2) = (3)),
but (1) is not ordered for some (all?) nonunimodular $G$.

If $G$ is unimodular then (1) = (2).
\end{rem}

\begin{rem}
Here is a frustrating illustration of our ignorance:
First recall that \cite[Theorems~7.2 and 7.3]{wiersmalp} show that for $G=SL(2,\R)$
the large ideals of $B(G)$
consist precisely of $\bar{E^p}^{\text{weak*}}$ for $1\le p\le \infty$, and moreover for $2\le p\le \infty$ these ideals are all distinct, the extremes being $B_r(G)$ for $p=2$ and $B(G)$ for $p=\infty$.
Now let $E=E^p$ for some $p\in (2,\infty)$.
Then $\bar E_0^{\text{weak*}}$ is a large ideal of $B(G)$,
and so by Wiersma's results
it coincides with $\bar{E^{p'}}^{\text{weak*}}$ for
a unique 
$p'\in [2,p)$.
But since we don't know whether $E^p$ is weakly ordered,
we can't determine whether $p'=p$.
\end{rem}

\section{Summary}\label{summary}

In this section we 
summarize the preceding discussion regarding the offending
\cite[Lemma~3.5 (1)]{graded}.

Most importantly,
in all cases where \cite[Lemma~3.5 (1)]{graded} is used, the hypothesis that $E_0$ be weak*-dense in $E$ should be mentioned and verified.
We emphasize that for large ideals $E$ of $B(G)$
(or even just nonzero $G$-invariant norm-closed ideals),
there is no problem: $C^*_{E,BG}(G)=C^*_{E,KLQ}(G)$.
However, if $E$ is just a nonzero $G$-invariant ideal, then it's probably best to use the Brown-Guentner convention for $C^*_E(G)$, namely take
\[
C^*_E(G)=C^*_{E,BG}(G)=C^*(G)/J_E
\]
rather than
$C^*_{E,KLQ}(G)=C^*(G)/\pann E$.
As we have seen, if we replace the given $E$ by $E_0:=\spn(E\cap P(G))$ then the two approaches give the same group $C^*$-algebra.
Note that this is the approach of \cite[Example~2.16]{bew2} for $E^p$.

We give a few examples of how results that mention \cite[Lemma~3.5 (1)]{graded} should be adjusted.
In that lemma itself, also item (2) depends upon the new hypothesis,
since part of \cite[Lemma~3.5 (2)]{graded} is equivalent to the equality $C^*(G)/\pann E=C^*(G)/J_E$.

\cite[Corollary~3.6 (1)]{graded}
says that a representation $U$ of $G$ is an $E$-representation if and only if $\ker U\supset \pann E$. 
Since this depends upon $\pann E=J_E$, the new hypothesis $\bar{E_0}=\bar E$ should be added here.

Similarly, the new hypothesis should be added to \cite[Observation~3.8 and Remark~3.18]{graded}.

\cite[Section~4]{graded}
is explicitly about the classical ideals mentioned in \secref{classical}, and in particular the problem arises in discussions of $C^*_{L^p(G)}(G)$.
However, it follows from  \propref{E2 weak} that 
\cite[Proposition~4.2]{graded}
is correct as stated.

\begin{rem}
This might be a convenient place to correct another (relatively harmless) misstatement in \cite[Remark~4.3]{graded}, where it is asserted that,
for discrete $G$,
the weak*-closure of $C_0(G)\cap B(G)$ being strictly larger than $B_r(G)$ occurs precisely when $G$ is a-T-menable but nonamenable,
and that for perhaps the earliest result along these lines one can see \cite{menchoff}.
This is garbled in a couple of ways.
First of all, Menchoff's 1916 paper gives examples of singular measures whose Fourier coefficients tend to zero, thus showing that
the intersection $E:=C_0(G)\cap B(G)$ can properly contain the Fourier algebra $A(G)$, even for $G=\T$.
This certainly does not, however, illustrate the phenomenon of the weak*-closure $\bar E$ being strictly larger than $B_r(G)$.
The second blunder here is the use of the word ``precisely'';
a-T-menability is equivalent to
$J_E=0$, and hence to $J_E\ann=B(G)$.
This property
certainly implies $\bar E=B(G)$, which is strictly larger than $B_r(G)$ if $G$ is nonamenable --- 
so, 
when $G$ is a-T-menable but nonamenable 
we have $\bar E\supsetneq B_r(G)$ for $E=C_0(G)\cap B(G)$.
On the other hand, it is not clear to us that $\bar E\supsetneq B_r(G)$ implies a-T-menability.

While we are at it, we can mention one more minor slip in \cite{graded}: in the bibliographic entry for \cite{menchoff} the French word ``d\'eveloppement'' is misspelled.
\end{rem}

%----------------------------

%\bibliographystyle{amsplain}
%\bibliography{cstar}

\providecommand{\bysame}{\leavevmode\hbox to3em{\hrulefill}\thinspace}
\providecommand{\MR}{\relax\ifhmode\unskip\space\fi MR }
% \MRhref is called by the amsart/book/proc definition of \MR.
\providecommand{\MRhref}[2]{%
  \href{http://www.ams.org/mathscinet-getitem?mr=#1}{#2}
}
\providecommand{\href}[2]{#2}

\end{document}